\newcommand{\EE}[1]{\underset{#1}{\mathop{\mathbb{E}}}}
\begin{document}
\begin{frontmatter}
\title{Entropic optimal transport is maximum-likelihood deconvolution}
\runtitle{Entropic OT is maximum-likelihood deconvolution}

\begin{aug}
\author{\fnms{Philippe}~\snm{Rigollet}\thanksref{t1}\ead[label=rigollet]{rigollet@math.mit.edu}}
\and
\author{\fnms{Jonathan}~\snm{Weed}\thanksref{t2}\ead[label=jon]{jweed@mit.edu}}

\affiliation{Massachusetts Institute of Technology}
\thankstext{t1}{This work was supported by NSF DMS-1712596, NSF TRIPODS-1740751, ONR N00014-17-1-2147, the Chan Zuckerberg Initiative DAF 2018-182642, and the MIT Skoltech Seed Fund.}
\thankstext{t2}{This work was supported in part by NSF Graduate Research Fellowship DGE-1122374.}

\address{{Philippe Rigollet}\\
{Department of Mathematics} \\
{Massachusetts Institute of Technology}\\
{77 Massachusetts Avenue,}\\
{Cambridge, MA 02139-4307, USA}\\
\printead{rigollet}
}

\address{{Jonathan Weed}\\
{Department of Mathematics} \\
{Massachusetts Institute of Technology}\\
{77 Massachusetts Avenue,}\\
{Cambridge, MA 02139-4307, USA}\\
\printead{jon}
}

\runauthor{Rigollet and Weed}
\end{aug}

\begin{abstract}
We give a statistical interpretation of entropic optimal transport by showing that performing maximum-likelihood estimation for Gaussian deconvolution corresponds to calculating a projection with respect to the entropic optimal transport distance.
This structural result gives theoretical support for the wide adoption of these tools in the machine learning community.
\end{abstract}

\begin{keyword}[class=KWD]
Entropy, optimal transport, deconvolution
\end{keyword}
\end{frontmatter}

\section{Introduction}
Optimal transport is a fundamental notion with arising in several branches of mathematics, including probability, analysis and statistics. More recently, it has found new applications in computational domains such as machine learning and image processing~~\cite{ArjChiBot17,BonPanPar11,CouFlaTui17,RubTomGui00,SolDe-Pey15,JitSzaChw16,MueJaa15,RolCutPey16}. This newfound utility was largely fueled by algorithmic advances allowing optimal transport distances to be computed quickly between large scale discrete distributions~\cite{PeyCut17}.
At the heart of these algorithmic techniques is the idea of entropic penalization, which has been leveraged to obtain near-linear-time approximation schemes for optimal transport distances~\cite{Wil69a,Cut13,AltWeeRig17}. Hereafter, we refer to this technique as \emph{entropic optimal transport}. This line of well established computational research stands in sharp contrast with our \emph{statistical} understanding of regularization for optimal transport, which is still in its infancy
\cite{ForHutNit18}. 
Entropic regularization has been shown to play a central statistical role in a variety of problems related to model selection~\cite{JudRigTsy08,Rig12,RigTsy11,RigTsy12, DalTsy08,DalTsy12,DalTsy12b}  but our knowledge of its effect on optimal transport is currently limited to  experimental evidence~\cite{Cut13,PeyCut17} without theoretical support.

In this note, we give a statistical interpretation of entropic optimal transport, showing that under some modeling assumptions, it corresponds to the objective function in maximum-likelihood estimation for deconvolution problems involving additive Gaussian noise.
This interpretation provides a first indication that optimal transport problems where data is subject to Gaussian observation error should be handled with entropic regularization. Moreover, our results indicate that in the same context, a relaxed version of optimal transport should be preferred, as it is equivalent to maximum-likelihood estimation even in absence of said modeling assumptions.

\section{Entropic optimal transport}

Throughout we denote by $\gamma$ a probability measure on $\R^d \times \R^d$ and by $\|\cdot\|$ the Euclidean norm over $\R^d$.
Given such a measure, we denote by $\pi_X \gamma$ and $\pi_Y \gamma$ the two measures on $\R^d$ obtained by projecting onto the first and second component, respectively.
Given probability measures $\mu$ and $\nu$ on $\R^d$, define
\begin{equation*}
\cM(\mu, \nu) \defeq \{\gamma : \pi_X \gamma = \mu, \pi_Y \gamma = \nu\}\quad \text{and} \quad
\cM(\mu) \defeq \{\gamma: \pi_Y \gamma = \mu\}\,.
\end{equation*}
We also recall the definition of Kullback-Leibler (KL) divergence between probability measures $\mu$ and $\nu$:
$$
D(\mu \| \, \nu)=\left\{
\begin{array}{ll}
\int \log \big(\frac{\dd \mu}{\dd \nu}\big) \dd \mu,& \text{if } \mu\ll \nu\\
\infty, & \text{otherwise.}
\end{array}\right.
$$

\begin{defin}
The \emph{entropic optimal transport distance} between $\mu$ and $\nu$ is
\begin{equation}
\Went(\mu, \nu) \defeq \min_{\gamma \in \cM(\mu, \nu)} \int \frac 1 2 \|x - y\|^2 \dd\gamma(x, y) + \sigma^2 I(\gamma)\,,\label{eq:ent_def}
\end{equation}
where $I(\gamma)$ is the \emph{mutual information} defined by
\begin{equation*}
I(\gamma) \defeq D(\gamma \, \| \, \pi_X \gamma \otimes \pi_Y \gamma)\,.
\end{equation*}
\end{defin}
Note that when $\sigma=0$, this corresponds the squared 2-Wasserstein distance between probability measures over $\R^d$~\cite{San15}. When $\sigma > 0$, $\Went$ no longer satisfies the axioms of a (squared) distance, but it still possesses useful distance-like properties~\cite{Cut13}.
We employ the term ``distance'' for all values of $\sigma$ for terminological consistency.
When $\mu$ and $\nu$ are discrete measures, the minimizer of~\eqref{eq:ent_def} is also discrete and agrees with the minimizer of
\begin{equation}
\label{eqn:entropicOT}
\min_{\gamma \in \cM(\mu, \nu)} \int \frac 1 2 \|x - y\|^2 \dd\gamma(x, y) - \sigma^2 H(\gamma)\,,
\end{equation}
where $H$ is the standard Shannon entropy:
$$
H(\gamma) \defeq \sum_{ij} \gamma_{ij} \log \frac{1}{\gamma_{ij}}\,,
$$
where $\gamma_{ij} \defeq \gamma(x_i, y_j)$ for $(x_i, y_j) \in \mathrm{supp}(\gamma)$.
Note that definition~\eqref{eqn:entropicOT} is the one proposed by~\cite{Cut13} for discrete measures, whereas definition~\eqref{eq:ent_def} corresponds to the appropriate generalization studied in~\cite{GenCutPey16} for measures that are not necessarily discrete.
It is not hard to check that $\Went(\mu, \nu)<\infty$ for any pair $(\mu, \nu)$ possessing finite second moments, since the independent coupling $\mu \otimes \nu$ in the minimization that appears in~\eqref{eq:ent_def} leads to a finite objective value.

A maximum-likelihood interpretation of entropic optimal transport is already known in the context of a large-deviation principle for Brownian motion~\cite{Leo14}. In this context, given two distributions $\mu$ and $\nu$ (viewed as the positions of particles at times $t = 0$ and $t = 1$), Schr\"odinger~\cite{Sch32} gave a heuristic argument motivated by statistical physics establishing that the law of independent particles undergoing Brownian motion conditioned on having initial and final distributions $\mu$ and $\nu$ respectively is induced by the solution to the optimal transport problem with entropic regularization~\cite{Leo14}.
 While suggestive, this interpretation does not hold any immediate implications for estimation problems where only data is available rather than distributions $\mu$ and $\nu$. 
Below, we introduce the classical deconvolution model for corrupted observations and show that in this context, entropic optimal transport is precisely the maximum-likelihood estimator.

\section{Deconvolution}

Let $\cP$ be a given family of probability distributions over $\R^d$ with finite second moments and let $P^*$ be an unknown distribution, also with finite second moment. The deconvolution problems consists in estimating $P^*$ on the basis of corrupted observations $Y_1, \ldots, Y_n$, where 
\begin{equation}
\label{EQ:gaussian_deconv}
Y_i = X_i + Z_i\,, \qquad i=1, \ldots, n\,,
\end{equation}
and the errors $Z_1, \ldots, Z_n$ are independent of $X_1, \ldots, X_n$. 
For identifiability purposes, the random variables $\{Z_i\}$ are assumed to be independent copies of a random variable $Z$ with known distribution: $Z \sim \cN(0,\sigma^2)$ where the variance $\sigma^2$ is known.

In this context, the distribution of $Y_i$ admits a density $\varphi_\sigma \star \dd P^*$ with respect to $\lambda$ where, for any $P \in \cP$, we define
\begin{equation}
\label{EQ:def_conv}
\varphi_\sigma \star \dd P(y)=\int \varphi_{\sigma}(y-x) \dd P(x)
\end{equation}
and $\varphi_\sigma$ denotes the density of $Z\sim \cN(0,\sigma^2)$. Under these assumptions, we call~\eqref{EQ:gaussian_deconv} the \emph{Gaussian deconvolution model}.

Deconvolution is a classical question of nonparametric statistics~\cite{CarHal88,Fan91,CaiChaDed11} and is core to mixture models~\cite{lin95} as well as statistical models with measurement errors~\cite{CarRupSte06}. As such, it has received significant attention from the statistics literature. More recently, it was shown that deconvolution has strong methodological and mathematical connections to optimal transport in the context of a problem known as uncoupled regression~\cite{RigWee18}.

A natural candidate to estimate $P^*$ is the maximum-likelihood estimator (MLE) $\hat P$ defined by
\begin{equation}
\label{EQ:def_MLE}
\hat P = \argmax_{P \in \cP} \sum_{i=1}^n \log\varphi_\sigma \star \dd P(Y_i)\,,
\end{equation}
The statistical properties of the MLE are well known and have been established under general conditions on the class $\cP$~\cite{BicDok06, GinNic16}. In the next section, we show that entropic optimal transport is in fact implementing~$\hat P$.  

\section{Entropic optimal transport is maximum-likelihood deconvolution}

In this section, we adopt the Gaussian deconvolution model~\eqref{EQ:gaussian_deconv} of the previous section. The extension to other distributions for the corruption errors $\{Z_i\}$ is postponed to Section~\ref{sec:extensions}.

Our main result involves families of distributions satisfying a particular closure condition.%
\begin{defin}
A class $\cP$ of probability measures is said to be \emph{closed under domination} if $Q \ll P$ for some $P \in \cP$ implies that  $Q \in \cP$.
\end{defin}
Many families are closed under domination. For example  the class of all measures, the class of all measures absolutely continuous with respect to some reference measure $\sigma$, the class of discrete measures, and the set of measures whose support is finite or contains at most $k$ points all possess this property. A class $\cP$ of probability measures may always be augmented to be closed under domination by adding to it the set of probability measures $\bigcup_{P \in \cP}\{Q\,:\, Q \ll P\}$. The extension to families not closed under domination is considered in Section~\ref{sec:extensions}.

We are now in a position to state our main result: a structural representation of the maximum-likelihood estimator $\hat P$ in terms of entropic optimal transport.
\begin{thm}\label{thm:main}
Let $\cP$ be a class of probability measures that is closed under domination and assume the Gaussian deconvolution model~\eqref{EQ:gaussian_deconv}.
Then the maximum-likelihood estimator $\hat P$ over $\cP$ defined in~\eqref{EQ:def_MLE} satisfies:
\begin{equation*}
\hat P = \argmin_{P \in \cP} \Went\Big(P, \frac 1 n \sum_{i=1}^n \delta_{y_i}\Big)\,.
\end{equation*}
In other words, the maximum-likelihood estimator $\hat P$ is the projection of the empirical measure $\frac 1n \sum_{i=1}^n \delta_{y_i}$ onto $\cP$ with respect to the entropic optimal transport distance $\Went$.
\end{thm}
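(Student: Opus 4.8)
The plan is to rewrite the entropic transport cost as a Kullback--Leibler divergence, exploit the fact that the target $\hat\nu \defeq \frac1n\sum_{i=1}^n \delta_{y_i}$ is discrete to disintegrate the optimal coupling fiberwise, and then ``complete the square'' so that the Gaussian cost combines with the divergence into the log-likelihood plus a nonnegative remainder. Concretely, dividing~\eqref{eq:ent_def} by $\sigma^2$ and using that for $\gamma \in \cM(P,\hat\nu)$ one has $I(\gamma) = D(\gamma \,\|\, P \otimes \hat\nu)$, I would first write
\begin{equation*}
\tfrac{1}{\sigma^2}\Went(P,\hat\nu) = \min_{\gamma \in \cM(P,\hat\nu)} \int \frac{\|x-y\|^2}{2\sigma^2}\dd\gamma + D(\gamma \,\|\, P \otimes \hat\nu)\,.
\end{equation*}
Because $\hat\nu$ is supported on $\{y_1,\dots,y_n\}$, any $\gamma$ with finite objective is supported on $\R^d \times \{y_1,\dots,y_n\}$, so it disintegrates as $\gamma = \frac1n\sum_j \gamma^{y_j}\otimes \delta_{y_j}$ with $\frac1n\sum_j \gamma^{y_j} = P$, and both integrals split as sums over $j$.

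The key algebraic step is a pointwise identity. Introducing the ``posterior'' probability measure $\dd P_j^*(x) \defeq \frac{\varphi_\sigma(y_j - x)}{\varphi_\sigma \star \dd P(y_j)}\dd P(x)$ and using $\exp(-\|x-y_j\|^2/2\sigma^2) = (2\pi\sigma^2)^{d/2}\varphi_\sigma(y_j - x)$, one checks that for every probability measure $\rho$,
\begin{equation*}
\int \frac{\|x-y_j\|^2}{2\sigma^2}\dd\rho(x) + D(\rho \,\|\, P) = D(\rho \,\|\, P_j^*) - \log \varphi_\sigma \star \dd P(y_j) - \kappa\,,
\end{equation*}
with $\kappa \defeq \frac d2 \log(2\pi\sigma^2)$ a constant independent of $\rho$ and $P$. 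Summing over $j$ and minimizing over the fibers subject to $\frac1n\sum_j \gamma^{y_j} = P$ yields
\begin{equation*}
\tfrac1{\sigma^2}\Went(P,\hat\nu) = -\tfrac1n\textstyle\sum_{i=1}^n \log\varphi_\sigma\star\dd P(y_i) - \kappa + R(P)\,, \qquad R(P) \defeq \min_{\frac1n\sum_j \gamma^{y_j}=P} \tfrac1n\textstyle\sum_j D(\gamma^{y_j}\,\|\,P_j^*) \ge 0\,,
\end{equation*}
since the divergences are nonnegative. In particular $\tfrac1{\sigma^2}\Went(P,\hat\nu) \ge -\tfrac1n\sum_i \log\varphi_\sigma\star\dd P(y_i) - \kappa$ for every $P \in \cP$, i.e.\ the transport cost always dominates the negated, normalized log-likelihood up to the additive constant $\kappa$.

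It then suffices to show that this lower bound is attained at $\hat P$, that is, that $R(\hat P) = 0$. Since $R(P) = 0$ exactly when $P = \frac1n\sum_j P_j^*$, what must be proved is the \emph{self-consistency} of the MLE: $\hat P = \frac1n\sum_{i=1}^n P_i^*$, equivalently $g \defeq \frac1n\sum_i \frac{\varphi_\sigma(y_i - \cdot)}{\varphi_\sigma\star \dd \hat P(y_i)} = 1$ $\hat P$-almost everywhere. This is the step where the closure-under-domination hypothesis is indispensable: for any bounded $\psi \ge 0$ with $\int \psi \dd\hat P = 1$, the measure $\psi\,\dd\hat P$ is dominated by $\hat P$ and hence lies in $\cP$, so the concave map $t \mapsto \frac1n\sum_i \log\varphi_\sigma\star \dd((1-t)\hat P + t\psi\hat P)(y_i)$ is maximized at $t=0$; its right derivative gives $\int \psi g \dd\hat P \le 1$. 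Taking $\psi = \1_A/\hat P(A)$ yields $\int_A g\,\dd\hat P \le \hat P(A)$ for all measurable $A$, while $\int g \dd\hat P = 1$ by direct computation, forcing $g = 1$ $\hat P$-a.e. With $R(\hat P) = 0$ in hand, the chain
\begin{equation*}
\tfrac1{\sigma^2}\Went(P,\hat\nu) \ge -\tfrac1n\textstyle\sum_i \log\varphi_\sigma\star\dd P(y_i) - \kappa \ge -\tfrac1n\textstyle\sum_i \log\varphi_\sigma\star\dd \hat P(y_i) - \kappa = \tfrac1{\sigma^2}\Went(\hat P,\hat\nu)
\end{equation*}
holds for all $P \in \cP$ (the middle inequality being the defining property of $\hat P$), which proves $\hat P \in \argmin_{P\in\cP}\Went(P,\hat\nu)$.

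The main obstacle I anticipate is the self-consistency step: establishing $R(\hat P) = 0$ requires upgrading the first-order optimality of the MLE to the \emph{pointwise} statement $g = 1$ $\hat P$-a.e., and it is precisely closure under domination that supplies enough admissible perturbations $\psi\hat P \in \cP$ (without assuming $\cP$ convex) to make this upgrade legitimate. The remaining work is measure-theoretic bookkeeping---justifying the fiberwise disintegration when the $y_i$ are not distinct, and verifying integrability so that differentiating under the sum and exchanging $\min$ with $\sum$ are valid.
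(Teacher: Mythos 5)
Your proof is correct, and its decisive step is genuinely different from the paper's. The first half is essentially the paper's argument run in reverse: the paper starts from the likelihood, applies the Gibbs variational principle to write each $\ell_P(y_i) \defeq \log \varphi_\sigma \star \dd P(y_i)$ as a minimum over posterior measures $Q_i$, packages the $Q_i$ into a coupling $\gamma \in \cM(U)$ (where $U \defeq \frac1n\sum_i \delta_{y_i}$), and splits $D(\gamma\,\|\,P\otimes U) = I(\gamma) + D(\pi_X\gamma\,\|\,P)$ via Lemma~\ref{lem:product_decomp}; your completed-square identity is exactly that Gibbs principle with its minimizer $P_j^*$ written out explicitly, and your remainder satisfies $\sigma^2 R(P) = \Went(P,U) - V(P)$ in the paper's notation. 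The divergence is in how tightness at $\hat P$ is established. The paper never analyzes optimality conditions of the MLE: it compares the two variational problems globally, restricting to couplings with $\pi_X\gamma \ll P$, discarding the nonnegative term $D(\pi_X\gamma\,\|\,P)$, and invoking closure under domination to replace the outer variable $P$ by $\pi_X\gamma \in \cP$, yielding $\min_P V \ge \min_P \Went(\cdot,U)$ and hence, together with the pointwise bound $V \le \Went(\cdot,U)$, identification of the minimizers. You instead prove that the pointwise bound is an equality at $\hat P$ itself, via the self-consistency equation $g = 1$ $\hat P$-a.e., obtained from a first-order perturbation argument in which closure under domination supplies the admissible directions $\psi\,\dd\hat P$. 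Both proofs lean on the domination hypothesis, but for entirely different purposes. The trade-off: the paper's swap is shorter, needs no differentiation or existence of first-order conditions, and produces as a byproduct the relaxed-transport representation \eqref{EQ:unbalanced}, valid for arbitrary $\cP$, which drives Section~\ref{sec:extensions}; your route yields more refined information --- the exact gap formula $R(P) = \frac{1}{\sigma^2}\Went(P,U) + \frac1n\sum_i \ell_P(y_i) + \kappa \ge D\big(P \,\|\, \frac1n\sum_j P_j^*\big)$, the last bound by joint convexity of the KL divergence --- and recovers the classical fixed-point characterization of the nonparametric mixture MLE, making transparent why closure under domination is the right hypothesis. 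The technical debts you flag (disintegration with repeated $y_i$, right-differentiability of a finite sum of logarithms of affine functions) are routine, and the implicit argmin-uniqueness convention in your final chain of inequalities is the same one the paper adopts.
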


The projection estimator $\argmin_{P \in \cP} \Went\Big(P, \frac 1 n \sum_{i=1}^n \delta_{y_i}\Big)$ has been employed in the machine learning community~\cite{MonMulCut16,GenPeyCut18} as a smoothed version of a minimum Kantorovich distance estimator~~\cite{BasBodReg06} more suitable for optimization. Theorem~\ref{thm:main} shows that this estimator has a statistical interpretation in addition to its computational benefits.

As noted above, in the special case when $\sigma^2 = 0$, the quantity $\Went$ reduces to the squared $2$-Wasserstein distance $W$. In the context of Gaussian mixture models, when $\cP$ is the class of probability distributions supported on at most $k$ points, solving $\argmin_{P \in \cP} W\Big(P, \frac 1 n \sum_{i=1}^n \delta_{y_i}\Big)$ corresponds to performing a ``hard'' clustering of the data by minimizing the $k$-means objective. It is known, however, that hard $k$-means clustering does not lead to consistent estimation of the centroids in a mixture of Gaussians model, whereas consistent estimation \emph{can} be achieved with the MLE, which induces a relaxed ``soft'' clustering~\cite{KeaManNg97}.
Theorem~\ref{thm:main} implies that replacing $W$ by $\Went$ precisely corresponds to this relaxation.

\begin{proof}
Write for simplicity $\ell_P \defeq \log\varphi_\sigma \star \dd P$.
By~\eqref{EQ:def_conv}, we have
\begin{equation*}
\ell_P(y_i) = C + \log \int \exp\big(- \frac{1}{2\sigma^2} \|x - y_i\|^2\big) \dd P(x)\,,
\end{equation*}
where $C$ is a constant not depending on $y_i$ or $P$.
The Gibbs variational principle~\cite[Equation~(5.2.1)]{Cat04} then implies that
\begin{equation*}
\ell_P(y_i) = C - \frac{1}{\sigma^2}\min_{Q_i} \left\{ \frac 12 \EE{x \sim Q_i}\|x - y_i\|^2 + \sigma^2D(Q_i \, \| \, P) \right\}\,,
\end{equation*}
where the minimization is taken over probability measures on $\R^d$.
Then, by definition of the MLE, we have
\begin{equation*}
\hat P = \argmin_{P \in \cP} \min_{Q_1, \dots, Q_n} \frac 1 n \sum_{i=1}^n \left[\frac 1 2\EE{x \sim Q_i}  \|x - y_i\|^2 + \sigma^2D(Q_i \, \| \, P) \right]\,.
\end{equation*}
Next, given any set of $n$ distributions $\{Q_1, \dots, Q_n\}$ on $\R^d$, we can define the joint probability measure $\bar \gamma$
on $\R^d \times \{y_1, \dots, y_n\}$ by
\begin{equation*}
\bar \gamma \defeq \frac 1n \sum_{i=1}^n Q_i \otimes \delta_{y_i}\,.
\end{equation*}
Note that $\pi_Y \gamma = U$, the uniform distribution on $\{y_1, \dots, y_n\}$.
Conversely, for any joint probability measure $\bar \gamma$ on $\R^d \times \R^d$ satisfying $\pi_Y \gamma = U$, we can decompose $\gamma$ as $\frac 1n \sum_{i=1}^n Q_i \otimes \delta_{y_i}$ for some $Q_1, \dots, Q_n$.
This bijection between sets of $n$ probability measures $\{Q_1, \dots, Q_n\}$ on $\R^d$ and joint measures $\bar \gamma \in \cM(U)$ satisfies the equality
\begin{equation*}
\frac 1 n \sum_{i=1}^n \left[ \frac 1 2\EE{x \sim Q_i} \|x - y_i\|^2 + \sigma^2D(Q_i \, \| \, P) \right] = \frac 1 2\EE{(x,y)\sim\bar \gamma}  \|x - y\|^2 + \sigma^2D(\bar \gamma \, \| \, P \otimes U)\,.
\end{equation*}

We can therefore leverage this bijection to rewrite $\hat P$ as
\begin{equation*}
\hat P = \argmin_{P \in \cP} \min_{\gamma \in \cM(U)}\left\{\frac 1 2  \EE{(x,y)\sim \gamma} \|x - y\|^2 + \sigma^2D(\gamma \, \| \, P \otimes U)\right\}\,.
\end{equation*}

By Lemma~\ref{lem:product_decomp},
\begin{align*}
D(\gamma \, \| \, P \otimes U)
 = I(\gamma) + D(\pi_X \gamma \| P)\,,
\end{align*}
where we have used that  $D(\pi_Y \gamma \| U) = 0$ for any $\gamma \in \cM(U)$. We obtain
$$
\hat P = \argmin_{P \in \cP}V(P)\,,
$$
where 
\begin{equation}
\label{EQ:defV}
V(P):= \min_{\gamma \in \cM(U)}\left\{\frac 1 2  \EE{(x,y)\sim \gamma} \|x - y\|^2 + \sigma^2I(\gamma) + \sigma^2D(\pi_X \gamma \| P)\right\}
\end{equation}

Next, for any $P \in \cP$, denote by $\gamma_P$ the coupling that achieves the minimum in the definition~\eqref{eq:ent_def} of $\Went(P, U)$ and observe that since $D(\pi_X \gamma_P \| P)=0$, we get
\begin{align*}
V(P)
\le \frac 1 2  \EE{(x,y)\sim \gamma_P} \|x - y\|^2 + \sigma^2I(\gamma_P)=W_{\sigma^2}(P,U)<\infty\,,
\end{align*}
since $P$ has finite second moment.
We now show that the functions $P \mapsto  V(P)$ and $P \mapsto W_{\sigma^2}(P,U)$
achieve their minimum over $\cP$ at the same $P$.
To that end, observe that since $V(P)<\infty$, the minimum in the definition~\eqref{EQ:defV} of $V$ may be restricted to couplings $\gamma$ such that $\pi_X\gamma \ll P$, since otherwise $D(\pi_X \gamma \| P)$ is infinite. 
Thus, 
\begin{align}
\min_{P \in \cP}V(P)&=\min_{P \in \cP}\min_{\substack{\gamma \in \cM(U)\\\pi_X \gamma \ll P}}\left\{\frac 1 2  \EE{(x,y)\sim \gamma} \|x - y\|^2 + \sigma^2I(\gamma) + \sigma^2D(\pi_X \gamma \| P)\right\}\label{EQ:unbalanced}\\
& \ge \min_{P \in \cP}\min_{\substack{\gamma \in \cM(U)\\\pi_X \gamma \ll P}}\left\{\frac 1 2  \EE{(x,y)\sim \gamma} \|x - y\|^2 + \sigma^2I(\gamma)\right\}\nonumber\\
&= \min_{P \in \cP}\min_{\gamma \in \cM(P,U)}\left\{\frac 1 2  \EE{(x,y)\sim \gamma} \|x - y\|^2 + \sigma^2I(\gamma) \right\}\nonumber\\
&=\min_{P \in \cP}W_{\sigma^2}(P,U)\,,\nonumber
\end{align}
where in the inequality we used that $D(\pi_X \gamma \| P) \geq 0$ and in the second equality we used that $\cP$ is closed under domination.
In light of the previous two displays, we conclude that $\hat P = \argmin_{P \in \cP} \Went\Big(P, \frac 1n \sum_{i=1}^n \delta_{y_i}\Big)$, as claimed.

\end{proof}

\section{Extensions}
\label{sec:extensions}

\subsection{Relaxed transport}
Traditional parametric classes of distributions $\cP$ are often not closed under domination. For example, the one-dimensional scale/location family with template density $\varphi$ with respect to the Lebesgue measure $\mathrm{Leb}$ on $\R$ is defined by
$$
\cP=\Big\{P\,:\, \frac{\dd P}{\dd \mathrm{Leb}}(\cdot)=\frac{1}{\tau}\varphi\big(\frac{\cdot - \mu}{\tau}\big)\,, \mu \in \R, \tau>0\Big\}\,.
$$
Clearly $\cP$ is not closed under domination. In such cases, Theorem~\ref{thm:main} can fail to hold as illustrated by Proposition~\ref{PROP:fail} below. However, it follows from~\eqref{EQ:unbalanced} in the proof of Theorem~\ref{thm:main} that the following representation for the MLE \emph{always} holds:
$$
\hat P=\argmin_{P \in \cP} W_{\sigma^2}^{\textsf{rel}}\big(P, \frac{1}{n} \sum_{i=1}^n \delta_{y_i}\big)\,,
$$
where $W_{\sigma^2}^{\textsf{rel}}$ denotes the \emph{relaxed entropic optimal transport} distance defined for any probability measures $\mu, \nu$ by
$$
 W_{\sigma^2}^{\textsf{rel}}(\mu, \nu)=\min_{\substack{\gamma \in \cM(\nu) \\ \pi_X \gamma \ll \mu}}  \int \frac 1 2 \|x - y\|^2 \dd \gamma(x, y) + \sigma^2\big[ I(\gamma) + D(\pi_X \gamma \, \| \, \mu)\big]\,.
$$
This result indicates that it may be preferable to use relaxed transport in statistical contexts.
Relaxing the marginal constraints in the optimal transport problem is an idea which has attracted significant recent interest ~\cite{FroZhaMob15,LieMieSav18,ChiPeySch16} after it was first formally proposed in~\cite{Ben03} under the name ``unbalanced transport'' to generalize optimal transport to apply to nonnegative measures with different total mass. Relaxed optimal transport has since been used to improve robustness to sampling noise in statistical applications~\cite{SchShuTab17}.

We now exhibit a simple example of a class $\cP$ that is not closed under domination and for which Theorem~\ref{thm:main} fails to hold. For any $\sigma > 0$, let $\cP=\{P_1, P_2\}$ where $P_1$ and $P_2$ are two probability measures on the real line defined respectively by 
$$
P_1 \defeq \frac{1}{2} (\delta_{0} + \delta_{4\sigma})\quad \text{and} \quad P_2 \defeq \frac 12 (\delta_{2\sigma} + \delta_{6 \sigma})
$$

Let $X\sim P_1$ and let $Y = X+Z$ where $Z \sim \cN(0, \sigma^2)$ is independent of $X$. 
\begin{prop}
\label{PROP:fail}
With probability at least $.15$, we have
$$
P_1  = \argmin_{P \in \cP} \Went(P, \delta_{Y})\,, \quad \text{and} \quad 
P_2  = \argmax_{P \in \cP} \log \varphi_\sigma \star \dd P(Y)\,.
$$
In other words, the maximum-likelihood estimator and the projection with respect to the entropic optimal transport distance do not agree.
\end{prop}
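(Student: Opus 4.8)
The plan is to exploit the fact that the second argument of $\Went$ here is a single Dirac mass, which trivializes the transport problem. Any coupling $\gamma \in \cM(P, \delta_Y)$ must equal $P \otimes \delta_Y$, since its $Y$-marginal is supported at the single point $Y$; consequently $I(\gamma) = D(P\otimes\delta_Y \,\|\, P\otimes\delta_Y) = 0$ and the entropic penalty vanishes identically. Hence $\Went(P, \delta_Y) = \tfrac12 \EE{x\sim P}(x-Y)^2$, and the condition $P_1 = \argmin_{P} \Went(P,\delta_Y)$ reduces to a comparison of second moments. Expanding and cancelling the common $Y^2$ term, $(0-Y)^2 + (4\sigma-Y)^2 < (2\sigma-Y)^2 + (6\sigma-Y)^2$ simplifies to the clean linear condition $Y < 3\sigma$.

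Next I would treat the likelihood side. The mixture densities are $\varphi_\sigma\star\dd P_1(Y) = \tfrac12[\varphi_\sigma(Y) + \varphi_\sigma(Y-4\sigma)]$ and $\varphi_\sigma\star\dd P_2(Y) = \tfrac12[\varphi_\sigma(Y-2\sigma)+\varphi_\sigma(Y-6\sigma)]$. Writing $s := Y/\sigma$ and dividing out the common normalizing constant of $\varphi_\sigma$, the event $P_2 = \argmax_P \log\varphi_\sigma\star\dd P(Y)$ becomes $g(s) > 0$, where
\[
g(s) := e^{-(s-2)^2/2} + e^{-(s-6)^2/2} - e^{-s^2/2} - e^{-(s-4)^2/2}\,.
\]
The useful manipulation is to factor out the largest term $e^{-(s-2)^2/2}$: since $e^{-s^2/2} = e^{-(s-2)^2/2}e^{-2(s-1)}$ and $e^{-(s-4)^2/2} = e^{-(s-2)^2/2}e^{-2(3-s)}$, one gets
\[
g(s) = e^{-(s-2)^2/2}\big(1 - e^{-2(s-1)} - e^{-2(3-s)}\big) + e^{-(s-6)^2/2}\,.
\]
Dropping the nonnegative last term, it follows that $g(s) > 0$ whenever $h(s) := e^{-2(s-1)} + e^{-2(3-s)} < 1$.

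Combining the two reductions, the target event contains $\{1.01 < Y/\sigma < 2.99\}$: on this interval $s < 3$ holds trivially, and $g(s) > 0$ follows from $h(s) < 1$. Indeed $h$ is convex and symmetric about $s = 2$, so $\{h < 1\}$ is an interval symmetric about $2$; since $h$ is decreasing on $(-\infty, 2)$ with $h(1) = 1 + e^{-4} > 1$ and $h(1.01) = e^{-0.02} + e^{-3.98} \approx 0.999 < 1$, the left endpoint of $\{h<1\}$ lies in $(1, 1.01)$ and the right endpoint in $(2.99, 3)$, so indeed $(1.01, 2.99) \subset \{h < 1\}$. Finally, under $X \sim P_1$ we have $Y/\sigma \sim \tfrac12\cN(0,1) + \tfrac12\cN(4,1)$, a mixture symmetric about $2$, so for the symmetric interval $(1.01, 2.99)$,
\[
\PP\big[1.01 < Y/\sigma < 2.99\big] = \tfrac12\big[\Phi(2.99) - \Phi(1.01)\big] + \tfrac12\big[\Phi(-1.01) - \Phi(-2.99)\big] = \Phi(2.99) - \Phi(1.01) \approx 0.155\,,
\]
which exceeds $0.15$.

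The main obstacle is that the $0.15$ threshold is genuinely tight: the interval on which both conclusions hold is only about $(1.01, 3)$, and shrinking it even to $(1.1, 3)$ already drops the probability below $0.15$. So the argument must not be wasteful — one needs the clean sufficient condition $h(s) < 1$ (rather than a pointwise check of $g$) to justify positivity of $g$ on the whole subinterval, and the symmetry of both the interval and the mixture about $s = 2$ to collapse the probability to the single explicit quantity $\Phi(2.99) - \Phi(1.01)$, which can then be bounded below by $0.15$ with an honest numerical margin.
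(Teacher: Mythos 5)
Your proof is correct and takes essentially the same route as the paper's: the coupling with $\delta_Y$ is forced to be the product coupling so the entropic penalty vanishes and the transport condition reduces to $Y < 3\sigma$, the likelihood comparison holds on an interval of the form $(1.01\sigma, 2.99\sigma)$, and the probability of that event under $P_1$ collapses by symmetry to $\Phi(2.99)-\Phi(1.01)\approx 0.155 \ge 0.15$. The only difference is that where the paper merely asserts the likelihood inequality ``can be checked'' on $[1.01,3)$, you actually supply the check --- the factorization of $g$ and the convexity/symmetry argument giving the sufficient condition $h(s)<1$ --- which fills in a detail rather than changing the approach.
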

\begin{proof}
By rescaling, it suffices to consider the case $\sigma^2 = 1$.
For each $P \in \cP$, the set $\cM(P, \delta_Y)$ contains only the independent coupling $P \otimes \delta_Y$, for which the mutual information vanishes.
Therefore,
\begin{equation*}
\Went(P_1, \delta_Y) = W_0(P_1, \delta_Y) = \frac 14 (Y^2 + (Y - 4)^2) \quad \quad \Went(P_2, \delta_Y) = \frac 14 ((Y-2)^2 + (Y - 6)^2)\,,
\end{equation*}
so that $P_1$ is the unique minimizer over $\cP$ of $\Went(P, \delta_Y)$ as long as $Y < 3$.

On the other hand, $P_2$ is the unique maximum-likelihood estimator if
\begin{equation*}
e^{-Y^2/2} + e^{-(Y-4)^2/2} < e^{-(Y -2)^2/2} + e^{-(Y - 6)^2/2}\,,
\end{equation*}
and it can be checked that this condition holds on the interval $[1.01,3)$.

Therefore, if $Y \in [1.01,3)$, then the claimed situation occurs. To conclude the proof, it suffices to observe that 
$P_1(1.01 \le Y <3) \ge .5 \PP(1.01\le |Z| \le 2.99) \ge .15$.
\end{proof}

\subsection{General noise distribution}

In the is section, we raise the question of non-Gaussian deconvolution that arises when the errors $\{Z_i\}$ are not Gaussian. It turns out that a simple modification of our argument can be made to accommodate any noise distribution that admits a density $f$ with respect to the Lebesgue measure on $\R^d$. In this context, the MLE takes the form
\begin{equation}
\label{EQ:MLE_gen_noise}
\hat P = \argmax_{P \in \cP} \sum_{i=1}^n \log f \star \dd P(Y_i)
\end{equation}

The use of the squared Euclidean norm $\frac{1}{2\sigma^2}\|x-y\|^2$ in the objective~\eqref{eq:ent_def} is tailored to Gaussian errors and may be replaced with $-\log f(x-y)$. After rescaling by $\sigma^2$, we define
$$
W_f(\mu, \nu) \defeq \min_{\gamma \in \cM(\mu, \nu)} -\int \log f(x - y) \dd\gamma(x, y) + I(\gamma)\,.
$$
We assume in what follows that $\log f(\cdot - y)  \in L_1(P)$ for all $y \in \R^d$ and $P \in \cP$.
The following proposition is stated without proof as it follows from exactly the same arguments as Theorem~\ref{thm:main}.
\begin{prop}
\label{PROP:gen_noise}
Let $\cP$ be a class of probability measures that is closed under domination and assume the deconvolution model~\eqref{EQ:gaussian_deconv} where $Z_i$ has density $f$ with respect to the Lebesgue measure.
Then the maximum-likelihood estimator $\hat P$ over $\cP$ defined in~\eqref{EQ:MLE_gen_noise} satisfies:
\begin{equation*}
\hat P = \argmin_{P \in \cP} W_f\Big(P, \frac 1 n \sum_{i=1}^n \delta_{y_i}\Big)\,.
\end{equation*}
In other words, the maximum-likelihood estimator $\hat P$ is the projection of the empirical measure $\frac 1n \sum_{i=1}^n \delta_{y_i}$ onto $\cP$ with respect to the entropic optimal transport distance $W_f$.
\end{prop}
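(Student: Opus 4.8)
The plan is to follow the proof of Theorem~\ref{thm:main} line by line, replacing the Gaussian cost $\frac{1}{2\sigma^2}\|x-y\|^2$ by $-\log f(x-y)$ throughout. First I would set $\ell_P \defeq \log f \star \dd P$ and write $\ell_P(y_i) = \log \int \exp\big(\log f(y_i - x)\big) \dd P(x)$. The Gibbs variational principle applied to the functional $x \mapsto \log f(y_i - x)$ then yields $\ell_P(y_i) = -\min_{Q_i}\{\EE{x \sim Q_i}(-\log f(y_i - x)) + D(Q_i \, \| \, P)\}$, where the minimum ranges over probability measures on $\R^d$; here the standing hypothesis $\log f(\cdot - y) \in L_1(P)$ guarantees that the linear term is well defined and finite. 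In contrast with the Gaussian case no prefactor $\sigma^{-2}$ appears, reflecting the rescaling already absorbed into the definition of $W_f$.

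Negating and summing over $i$ converts the maximization defining $\hat P$ into a joint minimization over $P \in \cP$ and $Q_1, \dots, Q_n$. I would then invoke the same bijection as in Theorem~\ref{thm:main} between tuples $\{Q_1, \dots, Q_n\}$ and couplings $\bar\gamma = \frac1n\sum_{i=1}^n Q_i \otimes \delta_{y_i} \in \cM(U)$, under which the objective transforms exactly as before: $\frac1n\sum_i[\EE{x\sim Q_i}(-\log f(y_i - x)) + D(Q_i \, \| \, P)]$ equals $\EE{(x,y)\sim\bar\gamma}(-\log f(x-y)) + D(\bar\gamma \, \| \, P\otimes U)$. Applying Lemma~\ref{lem:product_decomp} together with $D(\pi_Y\gamma \, \| \, U)=0$ splits the divergence as $I(\gamma) + D(\pi_X\gamma \, \| \, P)$, so that $\hat P = \argmin_{P\in\cP} V_f(P)$, where $V_f(P) = \min_{\gamma\in\cM(U)}\{\EE{(x,y)\sim\gamma}(-\log f(x-y)) + I(\gamma) + D(\pi_X\gamma \, \| \, P)\}$.

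The final step is to show that $V_f$ and $W_f(\cdot, U)$ attain their minima over $\cP$ at the same point. As in the proof of the main theorem, since $D(\pi_X\gamma \, \| \, P)\geq 0$ I would lower bound $\min_P V_f(P)$ by dropping that term; the infimum over couplings $\gamma$ with $\pi_X\gamma \ll P$ is then identified with the infimum over $\gamma\in\cM(P,U)$ precisely because $\cP$ is closed under domination, which produces $\min_P W_f(P, U)$. The reverse inequality follows by testing $V_f(P)$ against the optimal coupling for $W_f(P, U)$, whose first marginal is $P$ and hence contributes $D(\pi_X\gamma \, \| \, P)=0$. Combining the two directions gives the claimed identity of minimizers.

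The main obstacle, and the only place where the Gaussian argument does not transfer automatically, is finiteness. In the Gaussian case the exponent $-\tfrac{1}{2\sigma^2}\|x-y\|^2$ is bounded above, so every integral converges unconditionally; for general $f$ I must instead lean on the hypothesis $\log f(\cdot - y)\in L_1(P)$ to ensure both that the Gibbs principle produces a finite value and that $W_f(P,U)<\infty$. The latter is verified by testing with the independent coupling $P\otimes U$, whose cost is $\frac1n\sum_i \EE{x\sim P}(-\log f(x - y_i))$ and therefore finite by assumption. Once finiteness is secured, every measure-theoretic manipulation above is word-for-word identical to the Gaussian case.
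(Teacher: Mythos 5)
Your proof is correct and takes exactly the approach the paper intends: the paper states Proposition~\ref{PROP:gen_noise} without proof, remarking that it ``follows from exactly the same arguments as Theorem~\ref{thm:main},'' and your line-by-line adaptation (Gibbs variational principle with cost $-\log f(x-y)$, the bijection with couplings in $\cM(U)$, Lemma~\ref{lem:product_decomp}, and the two-sided comparison of $V_f$ with $W_f(\cdot,U)$ using closure under domination) is precisely that argument. Your added attention to finiteness via the standing hypothesis $\log f(\cdot - y)\in L_1(P)$, verified with the independent coupling $P\otimes U$, is exactly the role the paper assigns to that assumption.
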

In the case where $f(z) \propto \exp(-\|z\|_p^p)$, the cost $- \log f(x - y)$ corresponds to the $\ell_p$ metric arising in the definition of the $p$-Wasserstein distance. Another intriguing example is the cost $-\log \cos^2(\|x-y\|\wedge \pi/2)$, which appears in the definition of the Wasserstein-Fisher-Rao~\cite{ChiPeySch16} or Hellinger-Kantorovich~\cite{LieMieSav18} distance between positive measures. These formulations differ from ours in that they consider a version of relaxed transport which allows for misspecification of both marginals; nevertheless, our analysis suggests that inference involving these distances is likely to be robust to convolutional noise $Z$ supported on the Euclidean ball of radius $\pi/2$ around the origin with density
$$
f(z) \propto \cos^2(\|z\|)\1\big(\|z\| \le \frac{\pi}2\big)\,.
$$
The Wasserstein-Fisher-Rao/Hellinger-Kantorovich distance is motivated by a dynamic formulation of unbalanced transport and it is unclear whether the above noise distribution plays a special role in the context of deconvolution.
\section{Additional lemmas}

\begin{lemma}\label{lem:product_decomp}
Let $\gamma$ be a measure on $\R^d \times \R^d$, and let $\alpha$ and $\beta$ be probability measures on $\R^d$.
Then
\begin{equation*}
D(\gamma \, \| \, \alpha \otimes \beta) = I(\gamma) + D(\pi_X \gamma \, \| \, \alpha) + D(\pi_Y \gamma \, \| \, \beta)\,.
\end{equation*}
\end{lemma}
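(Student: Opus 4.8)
The plan is to prove the identity by reducing to the case $\gamma \ll \alpha \otimes \beta$ and then factoring the relevant Radon--Nikodym derivative. Write $\mu \defeq \pi_X \gamma$ and $\nu \defeq \pi_Y \gamma$, so that $I(\gamma) = D(\gamma \,\|\, \mu \otimes \nu)$ and the claimed identity reads $D(\gamma \,\|\, \alpha \otimes \beta) = D(\gamma \,\|\, \mu \otimes \nu) + D(\mu \,\|\, \alpha) + D(\nu \,\|\, \beta)$. First I would dispose of the singular case: if $\gamma \not\ll \alpha \otimes \beta$, then the left-hand side is $+\infty$, and I claim the right-hand side is as well. Indeed, if all three terms on the right were finite, then $D(\mu \,\|\, \alpha), D(\nu \,\|\, \beta) < \infty$ would give $\mu \ll \alpha$ and $\nu \ll \beta$, hence $\mu \otimes \nu \ll \alpha \otimes \beta$, while $I(\gamma) < \infty$ would give $\gamma \ll \mu \otimes \nu$; composing these would force $\gamma \ll \alpha \otimes \beta$, a contradiction. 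Thus both sides are infinite and the identity holds trivially.

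In the remaining case $\gamma \ll \alpha \otimes \beta$, let $g \defeq \dd\gamma / \dd(\alpha\otimes\beta)$. Computing marginals via Tonelli shows $\mu \ll \alpha$ and $\nu \ll \beta$ with densities $p(x) \defeq \dd\mu/\dd\alpha(x) = \int g(x,y)\,\dd\beta(y)$ and $q(y) \defeq \dd\nu/\dd\beta(y) = \int g(x,y)\,\dd\alpha(x)$, so that $\dd(\mu\otimes\nu)/\dd(\alpha\otimes\beta)(x,y) = p(x)q(y)$. Since $\mu(\{p = 0\}) = 0$ forces $\gamma(\{p(x) = 0\}\times\R^d) = 0$, and symmetrically for $q$, the measure $\gamma$ assigns no mass to $\{p(x)q(y) = 0\}$; hence $\gamma \ll \mu\otimes\nu$ and the chain rule for Radon--Nikodym derivatives gives, $\gamma$-almost everywhere,
$$
g(x,y) = \frac{\dd\gamma}{\dd(\mu\otimes\nu)}(x,y)\, p(x)\, q(y)\,.
$$
Taking logarithms and integrating against $\gamma$ then splits $D(\gamma \,\|\, \alpha\otimes\beta) = \int \log g\,\dd\gamma$ into the three pieces $\int \log\frac{\dd\gamma}{\dd(\mu\otimes\nu)}\,\dd\gamma = I(\gamma)$, $\int \log p\,\dd\gamma = \int \log p\,\dd\mu = D(\mu\,\|\,\alpha)$, and $\int \log q\,\dd\gamma = D(\nu\,\|\,\beta)$, which is exactly the claimed decomposition.

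The main point requiring care is the justification of this last splitting, since a priori it could produce an indeterminate $\infty - \infty$. I would handle this by recalling the standard fact that the negative part of a log-density is always integrable: since $t \mapsto t\log t \ge -1/e$, each of $\int (\log p)^-\,\dd\mu$, $\int (\log q)^-\,\dd\nu$, and $\int (\log \tfrac{\dd\gamma}{\dd(\mu\otimes\nu)})^-\,\dd\gamma$ is bounded by $1/e$ and hence finite. Consequently the negative part of $\log g$ is $\gamma$-integrable, so $\int \log g\,\dd\gamma$ is well defined in $(-\infty, +\infty]$ and additivity of the integral applies term by term regardless of whether the individual divergences are finite. The only genuinely measure-theoretic steps are the marginal density computation and the null-set argument establishing $\gamma \ll \mu\otimes\nu$; once those are in place the algebra is immediate, and the finiteness of the negative parts ensures the decomposition remains valid in the extended reals.
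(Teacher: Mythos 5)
Your proof is correct and takes essentially the same route as the paper's: factor the density $\dd\gamma/\dd(\alpha\otimes\beta)$ into the mutual-information part times the two marginal densities $p(x)q(y)$, then split the logarithm and integrate, reducing the marginal terms to integrals against $\pi_X\gamma$ and $\pi_Y\gamma$. The only difference is that you supply the justifications the paper leaves implicit — the contradiction argument showing both sides are infinite when $\gamma \not\ll \alpha \otimes \beta$, the null-set argument giving $\gamma \ll \pi_X\gamma \otimes \pi_Y\gamma$, and the integrable-negative-parts bound ruling out an $\infty - \infty$ in the splitting — which is added rigor rather than a different approach.
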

\begin{proof}
We assume $\gamma \ll \pi_X \gamma \otimes \pi_Y \gamma \ll \alpha \otimes \beta$, since otherwise both sides are infinite.
Under this condition, we have
\begin{align*}
D(\gamma \, \| \, \alpha \otimes \beta) & = \int \log \frac{\mathrm{d}\gamma}{\mathrm{d}\alpha\mathrm{d}\beta}(x, y) \dd \gamma(x, y) \\
& = \int \log \frac{\mathrm{d}\gamma}{\mathrm{d}\pi_X \gamma \mathrm{d}\pi_Y \gamma}(x, y) \dd \gamma(x, y) + \int \log \frac{\mathrm{d}\pi_X \gamma \mathrm{d}\pi_Y \gamma}{\mathrm{d}\alpha\mathrm{d}\beta}(x, y) \dd \gamma(x, y) \\
& = I(\gamma) + \int \log \frac{\mathrm{d}\pi_X \gamma}{\mathrm{d}\alpha}(x, y) \dd \gamma(x, y) + \int \log \frac{\mathrm{d}\pi_Y \gamma}{\mathrm{d}\beta}(x, y) \dd \gamma(x, y) \\
& = I(\gamma) + \int \log \frac{\mathrm{d}\pi_X \gamma}{\mathrm{d}\alpha}(x) \dd \pi_X\gamma(x) + \int \log \frac{\mathrm{d}\pi_Y \gamma}{\mathrm{d}\beta}(y) \dd \pi_Y\gamma(y) \\
& = I(\gamma) + D(\pi_X \gamma \, \| \, \alpha) + D(\pi_Y \gamma \, \| \, \beta)\,.
\end{align*}
\end{proof}

\bibliographystyle{alphaabbr}
\bibliography{weedmain,MLEOT}
\end{document}